\title[On some notions of good reduction]{On some notions of good reduction for endomorphisms of the projective line}
\author{Jung Kyu Canci, Giulio Peruginelli, Dajano Tossici}
\newtheorem{Teo}{Theorem}[section]
\newtheorem{Prop}[Teo]{Proposition}
\newtheorem{Lemma}[Teo]{Lemma}
\newtheorem{Cor}[Teo]{Corollary}
\theoremstyle{definition}
\newtheorem{Def}[Teo]{Definition}
\newtheorem{Esempio}[Teo]{Example}
\newtheorem{Oss}[Teo]{Remark}
\newcommand{\Dim}{{\bfseries Proof : }}
\newcommand{\Q}{\mathbb{Q}}
\newcommand{\N}{\mathbb{N}}
\newcommand{\Z}{\mathbb{Z}}
\newcommand{\SR}{\mathbb{P}^1}
\newcommand{\PL}{\mathbb{P}^1}
\begin{document}

%
\address{Departement Math. Universit\"at Basel\\Rheinsprung 21\\CH-4051 Basel\\ Switzerland.}
\email[Canci]{jkcanci@yahoo.it}
\address{Institut f\"ur Analysis und Comput. Number Theory\\Technische Universit\"at\\Steyrergasse 30\\ A-8010 Graz\\Austria.}
\email[Peruginelli]{peruginelli@math.tugraz.at}
\address{Scuola Normale Superiore\\Piazza dei Cavalieri 7\\
56126 Pisa\\ Italy.}
\email[Tossici]{dajano.tossici@sns.it}

\bigskip

\begin{abstract}\noindent Let $\Phi$ be an endomorphism of $\SR(\overline{\Q})$, the projective line over the algebraic closure of $\Q$, of degree $\geq2$ defined over a
number field $K$. Let $v$ be a non-archimedean valuation of $K$. We say that $\Phi$
has critically good reduction at $v$ if any pair of distinct ramification
points of $\Phi$ do not collide under reduction modulo $v$ and the same holds for
any pair of branch points. We say that $\Phi$ has simple good reduction at  $v$ if the map $\Phi_v$, the reduction of $\Phi$ modulo $v$,
has the same degree of $\Phi$.
We prove that if $\Phi$ has critically good reduction at $v$ and the reduction map $\Phi_v$ is separable,
then $\Phi$ has simple good reduction at $v$.
\end{abstract}
\maketitle
\section{Introduction}

Throughout this paper $K$ will be a number field and $\overline{\Q}$ the algebraic closure of $\Q$. More generally, for any arbitrary field $\Omega$, the symbol  $\overline{\Omega}$ will always denote the algebraic closure of $\Omega$.

In a recent paper  Szpiro and Tucker (\cite{SzT})  use a particular notion of good reduction
to prove a finiteness result for equivalence classes of endomorphisms of $\SR(\overline{\Q})$,
which we will indicate simply with $\SR$ in the sequel.
This result implies the Shafarevich-Faltings finiteness theorem for isomorphism classes of elliptic curves.

Let us recall the definition of good reduction used by Szpiro and Tucker, but before we fix some notation.
Let  $O_K$ be the ring of integers of $K$. For a fixed finite place $v$ of $K$ let  $O_v$ be the valuation ring and let $k(v)$ be the residue field. We will not distinguish between the place $v$ and the associated valuation.  Let $S$ be a fixed finite set of places of $K$ containing all the archimedean ones. We denote by $O_S$ the set of $S$-integers, namely
$$ O_S\doteqdot\{x\in K\mid |x|_v\leq 1 \ \text{for all}\ v\notin S\}.$$

Let $\Phi$ be an endomorphism of $\SR$ defined over $K$. We denote with $\mathcal{R}_{\Phi}$ the set of ramification points defined
over $\overline{\Q}$ of the map $\Phi$. Given a valuation $v$ of
$\overline{\Q}$ and a subset $E\subset\SR(\overline{\Q})$ we denote
with $(E)_v$ the subset of $\SR(\overline{k(v)})$, whose elements are
the reduction modulo $v$ of the elements of $E$.

Now we are ready to give the definition of good reduction used by Szpiro and Tucker in
\cite{SzT}:

\begin{Def}
Suppose that $v$ has been extended to $\overline{\Q}$. Let $\Phi$ be an endomorphism of $\SR$ of degree $\geq 2$ defined over $K$. We say that
$\Phi$ has \emph{critically good reduction} (in the sequel C.G.R.) at $v$  if

1) $\#\mathcal{R}_{\Phi}=\#(\mathcal{R}_{\Phi})_v$,\smallskip

2)$\#\Phi(\mathcal{R}_{\Phi})=\#(\Phi(\mathcal{R}_{\Phi}))_v$.
\end{Def}


\noindent As the authors of \cite{SzT} note,
this definition does not depend on the extension of $v$ to
$\overline{\Q}$.

We denote by  ${\rm PGL}(2,O_S)$ the group obtained as quotient of ${\rm GL}(2,O_S)$ modulo scalar matrices.
The group ${\rm PGL}(2,O_S)$ acts on $\SR$: to any element $\Gamma\in{\rm PGL}(2,O_S)$ we associate in a canonical
way an automorphism $\gamma$ of $\SR$. In \cite{SzT} the following equivalence relation on the set of endomorphisms of $\SR$ is used:
two such morphisms $\Psi$ and $\Phi$ are equivalent if there exist automorphisms $\gamma,\sigma$ associated to two elements
in $\in {\rm PGL}(2, O_S)$ such that
$$ \Psi=\gamma\circ\Phi\circ\sigma.$$

With the above notations and definitions, the main result in \cite{SzT} says that for each fixed positive integer
$n$ and fixed number field $K$ there are finitely many equivalence classes
of rational maps $\Phi:\SR\to\SR$ defined over $K$ of degree $n$
that ramify at three or more points and have C.G.R. at any valuation $v$ outside a
prescribed set $S$ of places of $K$, which includes all the archimedean
ones.


There is another definition of good reduction in the context of endomorphisms of $\SR$.
Let $\Phi:\SR\to\SR$ be a rational map defined over $K$, of the
form
$$\Phi([X:Y])=[F(X,Y):G(X,Y)]$$ where $F,G\in K[X,Y]$ are
coprime homogeneous polynomials of the same degree. We may suppose
that the coefficients of $F$ and $G$ are in $O_v$ and factoring out
any common factor we may suppose that at least one of them is a
$v$-unit. If that happens, we will say that $\Phi$ is in $v$-reduced form. So we may give the following definition:
\begin{Def}
Let $\Phi:\SR\to\SR$ be a rational map defined over $K$ and $v$ a finite place of $K$.
Suppose that $\Phi([X:Y])=[F(X,Y):G(X,Y)]$ is written in $v$-reduced form as above.
The reduced map $\Phi_v:\SR_{k(v)}\to\SR_{k(v)}$ is defined by $[F_v(X,Y):G_v(X,Y)]$, where $F_v$ and $G_v$ are
the polynomials obtained from $F$ and $G$  by reducing their coefficients modulo $v$.
\end{Def}
The second notion of good reduction we are going to consider is the following (first appeared in \cite{MoS}, but see also \cite{Silverman}):
\begin{Def}
A rational map $\Phi\colon\SR\to\SR$, defined over $K$, has \emph{simple good reduction} (in the sequel S.G.R) at a place $v$ if  $\deg\Phi=\deg{\Phi}_v$.
\end{Def}

Roughly speaking, in the above notation, $\Phi$ has S.G.R. at $v$ if $F_v$ and $G_v$  have no common factors. Alternatively, from a schematic point of view, the above definition means the following: if we consider $\Phi$ as a scheme morphism $\Phi:\mathbb{P}^1_K\to \mathbb{P}^1_K$ then $\Phi$ has S.G.R. at $v$ if there exists a morphism
$\Phi_{O_v}:\PL_{O_v}\to \PL_{O_v}$ which extends $\Phi$, i.e. the following diagram
\begin{center}
$
\begin{diagram}
\node{\ \mathbb{P}^{1}_{K}}\arrow{e,t}{\Phi} \arrow{s,l}{}
\node{\ \mathbb{P}^{1}_{K}}\arrow{s,r}{}\\
\node{\ \mathbb{P}^{1}_{O_v}}\arrow{e,t}{\Phi_{O_v}}\node{\ \mathbb{P}^{1}_{O_v}}
\end{diagram}
$
\end{center} is commutative, where the vertical maps are the natural open immersions.

The definition of simple good reduction is, perhaps, more natural than the definition of
critically good reduction. But note that a rational map on $\mathbb{P}^1(K)$ associated
to a polynomial in $K[z]$ has simple good reduction outside $S$ if and only the coefficients of
the polynomial are $S$-integers and its leading coefficient is an $S$-unit.
Therefore for sufficiently large $n$ the main theorem of \cite{SzT} would be
false if one considered the simple good reduction instead of the critically one.

In this paper we are concerned about the relations between these two
notions of good reduction for an endomorphism of $\SR$.
Already in [SzT] the authors remarked that the two notions are not
equivalent and they give examples where none of the two conditions implies the other.

Nevertheless they proved
the following proposition, that for the ease of readers we quote
below, in a slightly simpler form:

\begin{Prop}\label{ST}
Let $K$ be a number field with ring of integers $O_K$, $v$ a finite
place of $K$ and let $\Phi(x)=f(x)/g(x)$ be a rational function
of degree $d$ with coefficients in $O_K$, considered as a rational
function from $\SR$ in itself. Suppose that $R_{\Phi}$ has $2d-2$
elements and that the leading coefficients of $f$, $g$ and
$f'(x)g(x)-f(x)g'(x)$ are all $v$-adic units. Then, if $\Phi$ has
C.G.R. at $v$, it also has S.G.R. at $v$.
\end{Prop}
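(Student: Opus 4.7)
The plan is to argue by contradiction from the failure of simple good reduction. Since the leading coefficients of $f$ and $g$ are $v$-units, the polynomials $f$ and $g$ both have degree exactly $d$, their common homogenization is automatically in $v$-reduced form, and $\Phi$ fails to have S.G.R.\ at $v$ precisely when $f_v$ and $g_v$ share a common root $\alpha \in \overline{k(v)}$. I would suppose such an $\alpha$ exists and derive a contradiction by analyzing the Wronskian $W(x) := f'(x)g(x) - f(x)g'(x)$.

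The first step is to show that $W_v$ is separable of degree $2d-2$. The affine zeros of $W$ in $\overline{K}$ are exactly the finite ramification points of $\Phi$, and the hypothesis that the leading coefficient of $W$ is a $v$-unit gives $\deg W = 2d - 2 = \deg W_v$. Because $\#\mathcal{R}_\Phi = 2d-2$ matches the Riemann--Hurwitz total ramification, every ramification point has index $2$, none lies at $\infty$, and $W$ is separable with root set $\mathcal{R}_\Phi$. The first condition of C.G.R.\ then asserts that the $(2d-2)$ reductions of the ramification points are pairwise distinct; since they are all roots of $W_v$ and $W_v$ has degree $2d-2$, we conclude that $W_v$ is separable.

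The second step is the local computation at the putative common root $\alpha$. Writing $f_v = (x-\alpha)^m \tilde f$ and $g_v = (x-\alpha)^n \tilde g$ with $m,n \geq 1$ and $\tilde f(\alpha), \tilde g(\alpha) \neq 0$, a direct differentiation yields
\[
W_v = (m-n)(x-\alpha)^{m+n-1}\tilde f \tilde g + (x-\alpha)^{m+n}(\tilde f' \tilde g - \tilde f \tilde g').
\]
If $m - n$ is nonzero in $k(v)$, then $\ord_\alpha(W_v) = m+n-1 \geq 2$ (as $m \neq n$ forces $m+n \geq 3$); if $m = n$, or more generally if $m \equiv n \pmod{\mathrm{char}(k(v))}$, the first summand vanishes and $\ord_\alpha(W_v) \geq m+n \geq 2$. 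In every case $W_v$ has a multiple root at $\alpha$, contradicting the separability of $W_v$ proved in the first step.

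The main obstacle is the local Wronskian computation just described; everything else amounts to bookkeeping with Riemann--Hurwitz and the leading-coefficient hypotheses. It is worth noting that the argument uses only condition (1) of C.G.R.\ (separation of ramification points); the branch-point condition (2) is not needed here.
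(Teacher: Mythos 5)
Your proof is correct, but it takes a genuinely different route from the paper's. The paper disposes of this proposition in one line: it observes that the hypothesis that the leading coefficient of $\Phi^{(1)}=f'g-fg'$ is a $v$-unit forces $\Phi_v$ to be separable, and then invokes the general Theorem \ref{CPT} (whose proof of $a)\Rightarrow b)$ runs through Lemma \ref{lcphi1} and the non-collision of ramification points with the fibres over branch points). You instead give a self-contained discriminant-style argument: the hypotheses $\#\mathcal{R}_\Phi=2d-2$ and ${\rm lc}(\Phi^{(1)})\in O_v^\times$ force $\Phi^{(1)}$ to be separable of degree $2d-2$ with $v$-integral roots, condition (1) of C.G.R.\ then makes $(\Phi^{(1)})_v$ separable, while a common root of $f_v$ and $g_v$ would be a root of $(\Phi^{(1)})_v$ of multiplicity at least $m+n-1\geq 2$ by your local computation. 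Both arguments pivot on the Wronskian, but the contradiction mechanisms differ: the paper contradicts the separation of ramification points from the rest of the fibres over branch points (its Lemma \ref{lcphi1}(2), which needs the branch-locus condition), whereas you contradict separability of $(\Phi^{(1)})_v$ via a multiplicity count. What your approach buys is elementarity and a genuine strengthening in this special case --- as you note, only condition (1) of C.G.R.\ is used, and the branch-point condition is never invoked; what the paper's approach buys is the far more general Theorem \ref{CPT}, valid without the hypotheses $\#\mathcal{R}_\Phi=2d-2$ and without unit leading coefficients. Two cosmetic points: your assertion that $f$ and $g$ \emph{both} have degree exactly $d$ does not follow from the hypotheses (only $\max(\deg f,\deg g)=d$ does), but all you actually need is that $f_v$ and $g_v$ have no common zero at infinity, which holds since at least one of them has degree $d$ with unit leading coefficient; and your deduction that no ramification point lies at $\infty$ implicitly reads ``leading coefficient of $f'g-fg'$'' as the coefficient of $x^{2d-2}$ --- under the literal reading one should note that the argument goes through verbatim with $2d-3$ finite ramification points and $\deg\Phi^{(1)}=2d-3$.
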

We have obtained a significant improvement of this result.
\begin{Teo}\label{CPT}
Let $\Phi:\SR\to\SR$ be a morphism of degree $\geq 2$ defined over $K$.
Let $v$ be a finite place of $K$. Let $\Phi_v$ be the map defined as before. Let us suppose that  $\Phi_v$ is separable. Then  the following are equivalent:
\begin{itemize}
\item[a)]   $\Phi$ is C.G.R. at $v$;
\item[b)]   $\Phi$ is S.G.R. at $v$ and $\#\Phi(\mathcal{R}_{\Phi})=\#(\Phi(\mathcal{R}_{\Phi}))_v$.
\end{itemize}
\end{Teo}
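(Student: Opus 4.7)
The plan is to prove the two implications separately, using Riemann--Hurwitz together with the flatness of appropriate fiber schemes over $O_v$.

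For $(b)\Rightarrow(a)$, the branch condition is part of $(b)$, so it suffices to show that distinct ramification points of $\Phi$ reduce to distinct points. Under S.G.R., $\Phi$ extends to a finite flat morphism $\Phi_{O_v}:\SR_{O_v}\to\SR_{O_v}$ of degree $d$, which gives the fiber-preservation formula $e_P(\Phi_v)=\sum_{Q\in\Phi^{-1}(B),\, Q_v=P}e_Q(\Phi)$ for any $O_v$-valued point $B$. Both $\Phi$ and $\Phi_v$ being separable of degree $d$, Riemann--Hurwitz yields $\sum_Q(e_Q-1)=\sum_P(e_P(\Phi_v)-1)=2d-2$. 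The branch condition forces any two ramification points of $\Phi$ reducing to a common point $P'$ to lie in a single fiber $\Phi^{-1}(B_{P'})$, since otherwise distinct branch points would collide under reduction. Setting $S_{P'}=\{Q\in\Phi^{-1}(B_{P'}):Q_v=P'\}$ and combining the fiber formula with the two Riemann--Hurwitz identities gives $\sum_{P'\in\mathcal{R}_{\Phi_v}}(|S_{P'}|-1)=0$; since each $|S_{P'}|\geq 1$, one concludes $|S_{P'}|=1$ for every $P'$, which precludes ramification collisions.

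For $(a)\Rightarrow(b)$, the branch condition is part of $(a)$, so only S.G.R. must be shown, which I would do by contradiction. Suppose S.G.R. fails: writing $\Phi=[F:G]$ in $v$-reduced form, $h:=\gcd(F_v,G_v)$ has degree $e\geq 1$, and with $F_v=hF'$, $G_v=hG'$ the reduced morphism $\tilde\Phi_v=[F':G']$ has degree $d-e$ and is separable by hypothesis. A direct computation gives the affine Wronskian factorization $w_v=h^2w'$, where $w'$ is the Wronskian of $\tilde\Phi_v$. Combined with the flatness of the zero scheme of the Wronskian of $(F,G)$ inside $\SR_{O_v}$ and with the C.G.R.\ no-collision condition, this yields, for each $P\in Z(h)$, a unique ramification point $Q_P\in\mathcal{R}_\Phi$ reducing to $P$ and satisfying the identity $e_{Q_P}=e_P(\tilde\Phi_v)+2\,\mathrm{mult}_P(h)$.

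To derive the contradiction, I would resolve the indeterminacy of the rational map $\SR_{O_v}\dashrightarrow\SR_{O_v}$ by blowing up $Z(h)$ in the special fiber, obtaining a flat finite morphism $\tilde\Phi:\widetilde{\SR}_{O_v}\to\SR_{O_v}$ of degree $d$. The source special fiber decomposes into the strict transform of $\SR_{k(v)}$, on which $\tilde\Phi$ restricts to $\tilde\Phi_v$, plus exceptional divisors $E_P$ above each $P\in Z(h)$ carrying induced morphisms of degrees $d_P$ with $\sum_P d_P=e$ (degree conservation over a generic target point). Computing the $P$-multiplicity of the fiber of $\tilde\Phi$ over $B_P=\Phi(Q_P)$ in two ways---from the $K$-point $Q_P$ alone it is at least $e_{Q_P}$, whereas the strict-transform-plus-exceptional decomposition gives $\delta_P+d_P$ with $\delta_P\leq e_P(\tilde\Phi_v)$---yields $d_P\geq e_{Q_P}-e_P(\tilde\Phi_v)=2\,\mathrm{mult}_P(h)$. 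Summing over $P\in Z(h)$ gives $e=\sum_P d_P\geq 2e$, forcing $e=0$. The main obstacle is the careful setup of the blow-up and fiber decomposition, especially when $h$ has higher-multiplicity roots and iterated blow-ups are required; the crux is the factor-of-two clash between the Wronskian-forced inequality $d_P\geq 2\,\mathrm{mult}_P(h)$ and the degree conservation $\sum_P d_P=e$.
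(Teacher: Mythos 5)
Your $(b)\Rightarrow(a)$ direction is essentially the paper's own argument: Riemann--Hurwitz for $\Phi$ and for $\Phi_v$, the branch-locus condition to keep the reductions of distinct fibers disjoint, and a count showing that a collision of ramification points would force a strict inequality. One correction: in positive characteristic, separability of $\Phi_v$ does \emph{not} give $\sum_P(e_P(\Phi_v)-1)=2d-2$; with wild ramification one only has $\sum_P(e_P(\Phi_v)-1)\leq \deg \mathrm{R}_{\Phi_v}=2d-2$. Fortunately that is the direction your count actually needs, so the slip is harmless.

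Your $(a)\Rightarrow(b)$ direction genuinely departs from the paper, and is considerably heavier than necessary. The shared kernel is the factorization $(\Phi^{(1)})_v=h^2\,(f_1'g_1-f_1g_1')$ together with the fact that, under C.G.R.\ and separability, $\Phi^{(1)}$ has unit leading coefficient and $v$-integral roots, so that every root of $h$ is the reduction of a (by C.G.R.\ unique) ramification point. At this point the paper stops: after normalizing so that $0,\infty\in\mathcal{R}_\Phi$ with $\Phi(0)=0$ and $\Phi(\infty)=\infty$, a common root $\beta_v$ of $f_v$ and $g_v$ is simultaneously the reduction of some $\beta_1\in\Phi^{-1}(0)$, of some $\beta_2\in\Phi^{-1}(\infty)$, and of a ramification point $\alpha_i$; since $\beta_1\neq\beta_2$, one of them is a preimage of a branch point distinct from $\alpha_i$ yet congruent to it modulo $v$, contradicting the strengthened no-collision property of Lemma~\ref{lcphi1}(2). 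Your blow-up count replaces this with the clash between $e_{Q_P}\geq e_P(\tilde\Phi_v)+2\,\mathrm{mult}_P(h)$ (note: this must be an inequality, not an identity, when $\tilde\Phi_v$ is wildly ramified at $P$ --- again harmless) and the conservation $\sum_P d_P=e$. The outline is sound --- in fact the sharper standard statement is that the surplus multiplicity above each $P\in Z(h)$ equals $\mathrm{mult}_P(h)$ for \emph{every} $O_v$-point of the target, which kills $e\geq 1$ pointwise --- but the step you yourself flag as the main obstacle (finiteness and flatness of the resolved model, and the fact that the exceptional components absorb exactly degree $e$ over a generic target point) is precisely the nontrivial input, and it is asserted rather than proved. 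So: correct in outline, same key Wronskian computation, but the endgame of $(a)\Rightarrow(b)$ rests on an unproved (though true) geometric lemma that the paper's shorter valuation-theoretic argument avoids entirely.
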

We recall that $\Phi_v$ is separable if and only if it is not a $p$-th power as rational function, where $p$ is the characteristic of the residue field $k(v)$ of $v$.

The proposition of Szpiro and Tucker  follows from the above theorem since the fact that the leading term of $f'(x)g(x)-f(x)g'(x)$ is a unit implies in particular that $\Phi_v$ is separable.


If we remove the condition of separability the theorem may not be
true; let us consider for example the polynomial
$\Phi(x)=-3x^4+4x^3$: it has C.G.R. at the prime $3$ but it has
not S.G.R. at $3$, since its reduction $\Phi_3(x)=x^3$ has degree
less than $4$ (note that $\Phi_3$ is not constant). Nevertheless
there are examples where the map is both C.G.R. and S.G.R. at a
prime $p$ but the separability condition does not hold, like the
family of maps $\Phi(x)=x^p$, where $p$ is a prime number.\\
Nevertheless the condition on separability seems to be a good condition. Indeed if $p$, the integral prime under $v$, is bigger than the degree of a map $\Phi$, the $\Phi_v$ is separable if and only if it is non constant. Therefore a direct consequence of Theorem \ref{CPT} is the following corollary:

\begin{Cor}
Let $\Phi:\SR\to\SR$ be a morphism of degree $\geq 2$ defined over $K$.
Let $v$ be a finite place of $K$. Let $p$ be the prime of $\Z$ under the place $v$ and suppose that
$p>\deg(\Phi)$ and $\Phi$ has C.G.R. at $v$. Then $\Phi$
has S.G.R. at $v$ if and only if $\Phi_v$ is not constant.
\end{Cor}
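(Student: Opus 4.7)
The plan is to reduce the corollary directly to Theorem \ref{CPT} by showing that, under the hypothesis $p>\deg(\Phi)$, separability of $\Phi_v$ is equivalent to $\Phi_v$ being non-constant. Granting this equivalence, the corollary follows immediately: one direction ($\Phi$ S.G.R. $\Rightarrow$ $\Phi_v$ non-constant) is trivial because S.G.R. forces $\deg(\Phi_v)=\deg(\Phi)\geq 2$; for the other direction, non-constancy of $\Phi_v$ yields its separability, so Theorem \ref{CPT} (implication a $\Rightarrow$ b) upgrades C.G.R. to S.G.R.

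The substantive content is therefore the equivalence ``$\Phi_v$ separable $\Leftrightarrow$ $\Phi_v$ non-constant''. A constant map is not separable in the convention stated right after Theorem \ref{CPT} (a $p$-th power covers the constant case as well, or one simply declares separability only for non-constant morphisms), so one direction is automatic. For the other, I would argue as follows. In characteristic $p$, any inseparable non-constant rational self-map of $\PL$ factors through the Frobenius morphism $x\mapsto x^p$; concretely, after the separability criterion recalled in the paper, one can write $\Phi_v=\Psi\circ\mathrm{Frob}$ with $\mathrm{Frob}(x)=x^p$, whence $\deg(\Phi_v)=p\cdot\deg(\Psi)\geq p$. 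On the other hand, writing $\Phi$ in $v$-reduced form shows $\deg(\Phi_v)\leq\deg(\Phi)<p$. These two inequalities are incompatible unless $\Phi_v$ is separable.

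Assembling: given the hypotheses, if $\Phi_v$ is not constant then $1\leq\deg(\Phi_v)\leq\deg(\Phi)<p$, which by the above forces $\Phi_v$ separable; Theorem \ref{CPT} now converts the assumed C.G.R. into S.G.R. Conversely, S.G.R. yields $\deg(\Phi_v)=\deg(\Phi)\geq 2$, so $\Phi_v$ is non-constant.

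I do not expect a genuine obstacle here: the entire argument is the combination of Theorem \ref{CPT} with the threshold $p>\deg(\Phi)$, which makes any inseparability of $\Phi_v$ force a degree jump of size at least $p$ that the reduction cannot accommodate. The only mild care needed is to be explicit about the convention for separability of $\Phi_v$ (excluding, or handling separately, the constant case) so that the equivalence with non-constancy is stated unambiguously.
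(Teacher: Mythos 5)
Your proposal is correct and follows essentially the same route as the paper, which likewise observes that under $p>\deg(\Phi)$ an inseparable non-constant $\Phi_v$ would have degree at least $p$, so separability of $\Phi_v$ is equivalent to non-constancy, and then invokes Theorem \ref{CPT}. The paper treats the corollary as a direct consequence of exactly this observation, so there is nothing to add.
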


The result of Theorem \ref{CPT} establishes some sufficient conditions to have simple good reduction for covers.
A very general result in this direction is \cite[Thm 3.3]{Fulton}, where Fulton proves and extends some results stated by
Grothendieck in \cite{Grothendieck}. Analogue of Fulton's result for cover of curves, using different methods, are proved by Beckmann in  \cite[Prop. 5.3]{Beckmann}. For similar theorem on plane curves see also \cite{Uz}.
Zannier also wrote another result which is more related to ours. He proved a theorem concerning the good reduction for some particular covers $\SR \to \SR$.
The notion of good reduction used by Zannier is the following one:
using the above notation, a rational map $\Phi$ of $\SR$ defined over a field $L$ has \emph{good reduction}
at a prime $v$ if there exist $a,b\in L$ such that the composed map $\Phi (ax+b)$ has S.G.R. at $v$ and $(\Phi(ax+b))_v$ is separable.
We say that $\Phi$ has \emph{potential good reduction} if it has good reduction over a finite extension of $L$.
Now we are ready to state the Zannier's result:

\bigskip
\noindent {\bf Theorem 1 in \cite{Uz3}} \emph{Let $L$ be a field of characteristic zero, with discrete valuation $v$ having residue field $L_0$ of characteristic $p>0$. Let $\Phi\doteqdot f/g\in L(t)$ be a Belyi cover (i.e. unramified outside $\{0,1,\infty\}$) with $f(t)=\prod_{i=1}^h(t-\xi_i)^{\mu_i}, g(t)=\prod_{j=1}^k(t-\eta_j)^{\nu_j}$ polynomials of positive degree $n$ and the $\xi_i,\eta_j\in\bar{\Q}$ are pairwise distinct. If $\Phi$ does not have potential good reduction at $v$, then $p$ divides the order of the monodromy group and also some non zero integer of the form $\sum_{i\in A}\mu_i-\sum_{j\in B}v_j$ where $A\subset \{1,\ldots, h\}, B\subset \{1,\ldots, k\}$.}

\bigskip
The part of this theorem concerning the divisibility of order of monodromy group could be seen as an application of Beckmann's result
in \cite{Beckmann}, for curves of genus 0. But the method used by Zannier is completely different from the Beckmann and Fulton's ones.
Furthermore Zannier's result gives some new sufficient conditions for the good reduction for Belyi covers. \\
There is a substantial difference between our result and the Beckmann and Zannier's ones. Our Theorem \ref{CPT}
concerns the \textquotedblleft good reduction\textquotedblright \ for a fixed model of a cover $\SR \to \SR$.
The results obtained by Zannier and Beckmann give some sufficient condition for the existence of a model, of a given cover,
with good reduction. For example the polynomial $\Phi(z)=a^2z^2$ for all $a\in \Z$ does not have S.G.R. at all prime dividing
the integer $a$, but it has good reduction in the Beckmann and Zannier's definitions.

Zannier considers only covers $\SR \to \SR$ unramified outside $\{0,\infty, 1\}$, because this covers are strictly related
to the problem of existence of distinct monic polynomials $F,G$ having roots of prescribed multiplicities and $\deg(F-G$)
as small as predicted by Mason's $abc$ theorem. Zannier studied this existence problem in \cite{Uz2} in characteristic 0 and
in \cite{Uz3} in positive characteristic.

 We conclude with an arithmetical and dynamical application of our result.

Let $E$ be an elliptic curve defined over a number field $K$.
Let us consider a fixed model for $E$ given by an equation $y^2=F(x)=x^3+px+q$.
Let $S$ be the minimal finite set of places of $K$ containing all the archimedean ones, all the finite places
above 2 and such that the model is defined over $O_S$ with good reduction outside all finite places outside $S$.
As proved in \cite{SzT} the corresponding Latt\'es map $\Phi(x)=\frac{(F'(x))^2-8xF(x)}{4F(x)}$ has both C.G.R. and S.G.R. at $v$ for all places $v\notin S$. If $P\in E$ then $\Phi(x)$ is the $x$-coordinate of $2P$, where $x$ is the $x$-coordinate of $P$. The set of $K$--rational pre-periodic points of $\Phi$ is the set of $x$--coordinates of the
$K$--rational torsion points of $E$ (see \cite[p.33]{Silverman}). Therefore information about pre-periodic points  for $\Phi$ give us information on torsion points of $E$. This is one of the motivation to study the arithmetic of dynamical systems, and in particular the set of pre-periodic points of rational maps with S.G.R. outside a prescribed set. The application that we shall present involves a theorem proved by Canci in \cite{Canci} which is an extension to pre-periodic points of a result about periodic points due to Morton and Silverman (see \cite{MoS}) in terms of simply good reduction.

Now it is natural to study pre-periodic points of arithmetical dynamical systems, given by maps having $C.G.R.$ outside a prescribed set.
Unfortunately, the notion of C.G.R. may not be preserved under iteration. See for example $\Phi(x)=(x-1)^2$,
where $\Phi$ has C.G.R. everywhere but $\Phi^2$ (i.e. $\Phi\circ\Phi$) does not have C.G.R at 2. On the contrary
the condition of  S.G.R. is preserved under iteration. So it is a good notion for dynamical studies.

Before stating the dynamical result obtained by using our Theorem \ref{CPT}, Theorem 1 in \cite{Canci} and Corollary B in \cite{MoS}, we set some notation:
given a point $P$ in $\SR$ we denote by $O_{\Phi}(P)$ the orbit
of $P$ under the map $\Phi$, that is the set
$\{\Phi^n(P)\;|\;n\in\N\}$, where
$\Phi^n$ is the $n$-th iterate of
$\Phi$. Let $K$, $S$, $v$ and $\Phi_v$ be as above. Let $\#{\rm PrePer}(\Phi, \SR(K))$
be the cardinality of the set of $K$--rational pre-periodic points of the map $\Phi$.

\begin{Cor}\label{UBCforCGR}
Let $d$, $D$ and $t$ be fixed integers with $d\geq 2$. Then there exists a
constant $C=C(d,D,t)$ such that given a number field $K$ of degree
$D$, a finite set of places $S$ of $K$ of cardinality less than $t$,
a rational map $\Phi:\SR\to\SR$ of degree $d$ defined over $K$,
such that $\Phi$ has C.G.R. at every place $v$ outside $S$ and
$\Phi_v$ is not constant for each $v$ not in $S$,
then the following inequalities holds:
$$\#{\rm PrePer}(\Phi, \SR(K))\leq C(d,D,t).$$
\end{Cor}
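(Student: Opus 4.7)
The plan is to reduce the hypothesis of critically good reduction to that of simple good reduction, at the price of enlarging $S$ by a controlled amount, so that the uniform bound on pre-periodic points given by Theorem 1 of \cite{Canci} (the pre-periodic extension of Corollary B of \cite{MoS}) becomes applicable.

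First, I would enlarge $S$ by including all finite places of $K$ of small residue characteristic. Concretely, set
$$S' \doteqdot S\cup\{v\text{ a finite place of }K\mid \mathrm{char}(k(v))\leq d\}.$$
Since at most $D=[K:\Q]$ places of $K$ lie above any given rational prime, and there are $\pi(d)$ rational primes $\leq d$, we obtain $|S'|\leq t+D\,\pi(d)$, a quantity bounded in terms of $d,D,t$ alone.

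Next, I would check that $\Phi$ has S.G.R. at every $v\notin S'$. By hypothesis $\Phi_v$ is non-constant, and clearly $\deg \Phi_v\leq d$. The residue characteristic $p$ of $v$ satisfies $p>d$ by construction of $S'$, so $\Phi_v$ cannot be the $p$-th power of a non-constant rational map, because such a map would have degree at least $p>d$. Hence $\Phi_v$ is separable. Combining this with the assumption that $\Phi$ has C.G.R. at $v$, Theorem \ref{CPT} yields S.G.R. of $\Phi$ at $v$. Consequently $\Phi$ has S.G.R. at every place outside $S'$.

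Finally, I would invoke Theorem 1 of \cite{Canci} applied to the map $\Phi$ of degree $d$ defined over the number field $K$ (of degree $D$) with S.G.R. outside $S'$: this supplies a bound on $\#\mathrm{PrePer}(\Phi,\SR(K))$ depending only on $d$, $D$ and $|S'|$, which by the previous paragraph depends only on $d,D,t$. There is essentially no hard step here, since the crux---the passage from critically good to simply good reduction once the residue characteristic is large enough---is exactly the content of Theorem \ref{CPT}; the only thing to be slightly careful about is the a priori bound on $|S'|$, which ensures that the constant coming from \cite{Canci} depends only on $d,D,t$ and not on the particular field $K$ or map $\Phi$.
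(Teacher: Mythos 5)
Your reduction step is exactly the paper's: you enlarge $S$ to $S'$ by adjoining every finite place of residue characteristic at most $d$ (so $|S'|\leq t+D\,\pi(d)$ is controlled by $d$, $D$, $t$), observe that for $v\notin S'$ a non-constant $\Phi_v$ cannot be a $p$-th power and hence is separable, and then apply Theorem \ref{CPT} to convert C.G.R.\ into S.G.R.\ at every place outside $S'$. That part is correct and is precisely how the paper proceeds.

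The gap is in your final step. Theorem 1 of \cite{Canci} does not bound $\#{\rm PrePer}(\Phi,\SR(K))$; as the paper itself recalls, it bounds the \emph{length of every finite orbit} in $\SR(K)$ by a constant $c$ depending on the cardinality of $S'$ --- it is the orbit-length analogue of Morton--Silverman's bound on the minimal period, not a cardinality bound on the set of pre-periodic points. A bound on orbit lengths does not by itself count the pre-periodic points: an additional counting argument is required. The paper supplies it by first bounding the number of $K$-rational \emph{periodic} points (Morton--Silverman, \cite[Corollary B]{MoS}, gives a bound $b$ on the minimal period, so every $K$-rational periodic point is a fixed point of $\Phi^{b!}$, a map with finitely many fixed points bounded in terms of $d$ and $b$), and then observing that, since every finite orbit has length at most $c$, each pre-periodic point reaches a periodic one in at most $c$ steps, so each periodic point carries at most $d^{c}$ pre-periodic ancestors; the product of these quantities yields $C(d,D,t)$. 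This last step is elementary, but as written your proof delegates the entire count to a citation that does not contain it, so you should either quote the correct statement of \cite[Theorem 1]{Canci} and add the counting argument, or justify why a cardinality bound would follow directly.
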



%

Corollary \ref{UBCforCGR} represents a very particular case of Uniform Boundedness Conjecture for pre-periodic points stated by Morton and Silverman in \cite{MoS}.

It is worth noticing that computationally speaking, given a place $v$
of $K$, it is easier to check that $\Phi_v$ is not constant than
checking that $\Phi$ has S.G.R. at $v$. In the first case we have
to compute $\binom{d+1}{2}$ determinants of $2\times2$ matrices,
while in the second case we have to compute a determinant of a
$(2d+2)\times(2d+2)$ matrix.
In the first case we have to do an $O(d^2)$ number of calculations and in the second case the number is an $O(d^3)$.
Note that the LU decomposition of a matrix reduce the number of operations from $O(d!)$, necessary by using the  Leibniz rule,
to $O(d^3)$ calculations (e.g. see \cite{QSS}).

\subsection*{Acknowledgments}

We worked on this problem during our stay in the Hausdorff Research Institute for Mathematics in Bonn for the trimester program
on Algebra and Number Theory. We wish to thank the director Matthias Kreck and all the staff of the Institute for the given support while using the
facilities of the Institute.

The notion of Critically Good Reduction in Szpiro and Tucker's article has been pointed out to us by Pietro Corvaja: we wish to thank him for that. We thank Thomas Tucker for reading the article and pointing us out some inaccuracies.

Finally we thank Umberto Zannier for giving us precious references related to our work.

\section{Proof of main results}

From now on $K$ will be a number field, $v$ a non-archimedean valuation of $K$ and $O_v$ the associated valuation ring.
For any polynomial $h(x)\in O_v[x]$, $h_v(x)$ will denote the polynomial obtained from $h$ by reduction of its coefficients modulo $v$. Also for any $\alpha\in K$, we will denote its reduction modulo $v$ with the symbol $\alpha_v$. \\
For a given endomorphism $\Phi$ of $\SR$ with $\Phi([X:Y])=[F(X,Y):G(X,Y)]$ where $F,G\in O[X,Y]$ are homogeneous polynomials of the same degree $d$ without common factors, we associate the rational function $\Phi(x)=f(x)/g(x)$, that with abuse of notation we denote again by $\Phi$, where $f(X/Y)=F(X,Y)/Y^d$ and $g(X/Y)=G(X,Y)/Y^d$. We can reverse this argument, so to each rational function $\Phi\in K(x)$ we associate a unique endomorphism $\Phi$ of $\SR$.
From now on we suppose that $\Phi(x)=f(x)/g(x)$ is a rational function defined over $K$ written in $v$-reduced form.

Roughly speaking if
$$\Phi(x)=\frac{f(x)}{g(x)}=\frac{a_dx^d+\dots+a_0}{b_dx^d+\dots+b_0}$$
is represented in $v$-reduced form, that means $a_i,b_j\in O_v$ for $0\leq i\leq d$ and $0\leq j\leq d$, $a_d\neq $ or $b_d\neq 0$ and
$$\min \{v(a_d),v(a_{d-1}),\ldots,v(a_0),v(b_d),\ldots,v(b_0)\}=0,$$
then
$$\Phi_v(x)=\frac{f_v(x)}{g_v(x)}=\frac{(a_d)_vx^d+\dots+(a_0)_v}{(b_d)_vx^d+\dots+(b_0)_v}.$$
Note that in general $f_v$ and $g_v$ may not be coprime.\\
We define the following polynomial in $O_v[x]$
\begin{equation}\label{diff}
\Phi^{(1)}(x)\doteqdot f'(x)g(x)-f(x)g'(x)
\end{equation}
Its degree is less or equal to $2d-2$. It is quite easy to check that
$$
\mathcal{R}_{\Phi}\setminus\{\infty\}=\{x\in\overline \Q\;|\;\Phi^{(1)}(x)=0\}.
$$
and $\infty$ is a ramification point if the polynomial has degree $< 2d-2$. 

\bigskip

It may happen that the set of primes of critically bad reduction
increase if we compose with homotheties which are not
$v$-invertible, like for example: $f(x)=x^2+x$ and $A(x)=x/3$. The
map $f$ has C.G.R. in $3$ but the map $f^A=A\circ f\circ A^{-1}$ has not.
The following lemma shows that the two notions of good reduction at
a place $v$ are preserved under equivalence with $v$-invertible
elements of ${\rm PGL}(2,O_v)$.

\begin{Lemma}\label{GR}
Suppose that $\Phi$ has S.G.R. (C.G.R., respectively) at a place
$v$. Suppose that $\alpha$, $\beta$ are invertible rational maps associated to the elements $A,B\in{\rm PGL}(2,O_v)$ respectively. Then
$\alpha\circ\Phi\circ \beta$ has S.G.R. (C.G.R., respectively) at $v$.
\end{Lemma}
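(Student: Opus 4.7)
The plan is to reduce everything to the single technical fact that reduction modulo $v$ commutes with composition by automorphisms of $\SR$ coming from $\mathrm{PGL}(2,O_v)$. Writing $\Phi=[F:G]$ with $F,G\in O_v[X,Y]$ homogeneous of degree $d$ and in $v$-reduced form, and representing $\alpha,\beta$ by matrices $A=\bigl(\begin{smallmatrix}a&b\\c&d\end{smallmatrix}\bigr),B\in\mathrm{GL}(2,O_v)$, I would first check that $\alpha\circ\Phi$ corresponds to $[aF+bG:cF+dG]$ and $\Phi\circ\beta$ to $[F\circ B:G\circ B]$, both of which remain in $v$-reduced form, and that their reductions equal $\alpha_v\circ\Phi_v$ and $\Phi_v\circ\beta_v$. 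The crucial observation is that if all coefficients of $F\circ B$ and $G\circ B$ lay in the maximal ideal $\mathfrak{m}_v$, then reducing modulo $v$ would give $F_v(B_v\cdot(X,Y)^{T})=G_v(B_v\cdot(X,Y)^{T})=0$; since $B_v\in\mathrm{GL}(2,k(v))$ is an invertible change of variables, this forces $F_v=G_v=0$, contradicting the $v$-reducedness of $\Phi$. The case of $\alpha\circ\Phi$ is even simpler, as $A$ acts on the pair of coefficient vectors of $F,G$ by left multiplication.

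Once the compatibility $(\alpha\circ\Phi\circ\beta)_v=\alpha_v\circ\Phi_v\circ\beta_v$ is in hand, the S.G.R. statement is immediate. Since $A_v,B_v\in\mathrm{GL}(2,k(v))$, the reductions $\alpha_v,\beta_v$ are automorphisms of $\SR_{k(v)}$, so
\[
\deg(\alpha\circ\Phi\circ\beta)_v=\deg(\alpha_v\circ\Phi_v\circ\beta_v)=\deg\Phi_v=\deg\Phi=\deg(\alpha\circ\Phi\circ\beta).
\]

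For C.G.R., let $\Psi=\alpha\circ\Phi\circ\beta$. Since $\alpha,\beta$ are automorphisms, hence unramified, one has $\mathcal{R}_\Psi=\beta^{-1}(\mathcal{R}_\Phi)$ and $\Psi(\mathcal{R}_\Psi)=\alpha(\Phi(\mathcal{R}_\Phi))$. The same linear-algebra argument as above, applied to a single point $P=[x:y]\in\SR(\overline{\Q})$ (scaled so that $\min(v(x),v(y))=0$), shows that $\beta(P)_v=\beta_v(P_v)$, and similarly for $\alpha$ and $\beta^{-1}$. Since $\alpha_v$ and $\beta_v^{-1}$ are bijections of $\SR(\overline{k(v)})$, one obtains $(\mathcal{R}_\Psi)_v=\beta_v^{-1}((\mathcal{R}_\Phi)_v)$ and $(\Psi(\mathcal{R}_\Psi))_v=\alpha_v((\Phi(\mathcal{R}_\Phi))_v)$, with cardinalities equal respectively to $\#(\mathcal{R}_\Phi)_v$ and $\#(\Phi(\mathcal{R}_\Phi))_v$; the two C.G.R. equalities for $\Phi$ then transfer verbatim to $\Psi$.

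The main obstacle is the preservation of $v$-reducedness under the nonlinear-looking substitution $\Phi\mapsto\Phi\circ\beta$; the invertibility of $B_v$ over $k(v)$ resolves this cleanly without any explicit computation on coefficients, and everything else is bookkeeping.
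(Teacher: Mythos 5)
Your proof is correct and follows essentially the same route as the paper: both arguments come down to the two facts that reduction modulo $v$ commutes with pre- and post-composition by automorphisms arising from ${\rm PGL}(2,O_v)$, and that such automorphisms induce bijections on $\SR(\overline{k(v)})$, hence preserve (non-)congruence of points modulo $v$. The only difference is that the paper simply cites these facts from Silverman's book (Thm.~2.18 and Prop.~2.9), proving the compatibility $(\alpha\circ\Phi\circ\beta)_v=\alpha_v\circ\Phi_v\circ\beta_v$ itself only afterwards as Lemma~\ref{commut}, whereas you establish both from scratch via the invertibility of $A_v,B_v$ over $k(v)$.
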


\Dim For the S.G.R. we use the fact that the composition of maps with S.G.R. has S.G.R. (see  \cite[Thm2.18]{Silverman}).
For the C.G.R. we use \cite[Prop. 2.9]{Silverman}: given $P_1,P_2\in\SR$ such that $P_1\not\equiv P_2 \pmod{v}$ then if $A\in{\rm PGL}(2,O_v)$
we have that $A(P_1)\not\equiv A(P_2) \pmod{v}$. $\Box$
\bigskip

The condition of the above lemma is not necessary, consider for example:
$f(x)=x^2+3x$ and $A(x)=x/3$, then $f$ as well $f^{A}=A\circ f\circ
A^{-1}$ have C.G.R. at $3$ even if  $A\not\in {\rm PGL}(2,\Z_{(3)})$.

We shall use the following equivalence relation:

\begin{Def} Two rational maps $\Phi$ and $\Psi$ are $v$--equivalent if there exist two rational maps $\alpha$ and $\beta$ associated to two invertible elements $A,B\in{\rm PGL}(2,O_v)$ respectively such that $\Phi=\alpha\circ\Psi\circ\beta$.
\end{Def}

In general, the reduction modulo $v$ of rational maps does not commute with the composition of rational functions. For example consider
$$\Phi(x)=\dfrac{x^2+x}{x+p}\quad \text{and}\quad \Psi(x)=px$$ for a given prime integer $p$. We have
$$(\Phi\circ\Psi)_p=\dfrac{x}{x+1}\quad \text{and}\quad\Phi_p\circ\Psi_p=1.$$
But if the maps $\Phi$ and $\Psi$ have both S.G.R. at $v$ then
$$(\Phi\circ\Psi)_v=\Phi_v\circ\Psi_v.$$
(See Theorem 2.18 in \cite{Silverman}).
In order to have the commutativity of reduction modulo $v$ and composition it is not always necessary to have the S.G.R. for both maps. For example the following result holds:

\begin{Lemma}\label{commut}
Let $\Phi$ be an endomorphism of $\SR$ defined over $K$. Let $\alpha$ and $\beta$ two $v$-invertible rational maps (i.e. they are associated to two elements in PGL$(2,O_v)$). Then it holds
$$(\alpha\circ\Phi\circ\beta)_v=\alpha_v\circ\Phi_v\circ\beta_v.$$
\end{Lemma}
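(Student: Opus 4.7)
Write $\Phi$ in $v$-reduced form as $\Phi=[F(X,Y):G(X,Y)]$ with $F,G\in O_v[X,Y]$ homogeneous of degree $d$ having at least one coefficient which is a $v$-unit. Pick representatives of $\alpha,\beta\in\mathrm{PGL}(2,O_v)$ by matrices $A=\bigl(\begin{smallmatrix}a&b\\c&d\end{smallmatrix}\bigr)$, $B=\bigl(\begin{smallmatrix}a'&b'\\c'&d'\end{smallmatrix}\bigr)$ with entries in $O_v$ and unit determinants; note that then $A^{-1}$ and $B^{-1}$ also have entries in $O_v$, and $\alpha_v$, $\beta_v$ are well-defined automorphisms of $\SR_{k(v)}$. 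The plan is to show that each of the two compositions $\Phi\circ\beta$ and $\alpha\circ\Phi$ remains in $v$-reduced form, so that reduction modulo $v$ commutes with each composition separately; applying this twice gives the lemma.

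First I would treat right composition: $\Phi\circ\beta=[F(a'X+b'Y,c'X+d'Y):G(a'X+b'Y,c'X+d'Y)]$, whose coefficients are $O_v$-linear combinations of those of $F,G$ and hence lie in $O_v$. If every coefficient lay in $\mathfrak m_v$, then applying the same argument with the $v$-integral matrix $B^{-1}$ would show that the coefficients of $F,G$ all lie in $\mathfrak m_v$, contradicting the $v$-reduced form of $\Phi$. Hence $\Phi\circ\beta$ is already in $v$-reduced form, so $(\Phi\circ\beta)_v=[F_v(a'_vX+b'_vY,c'_vX+d'_vY):G_v(a'_vX+b'_vY,c'_vX+d'_vY)]=\Phi_v\circ\beta_v$.

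Next, for left composition: $\alpha\circ\Phi=[aF+bG:cF+dG]$, again with coefficients in $O_v$. If all these coefficients were in $\mathfrak m_v$, then since $\det A$ is a unit, solving the $2\times 2$ linear system modulo $v$ would force $F$ and $G$ to have all coefficients in $\mathfrak m_v$, again contradicting the $v$-reduced form of $\Phi$. So $\alpha\circ\Phi$ is in $v$-reduced form and its reduction equals $\alpha_v\circ\Phi_v$. Combining the two paragraphs, $\alpha\circ\Phi\circ\beta$ is in $v$-reduced form and its reduction equals $\alpha_v\circ\Phi_v\circ\beta_v$.

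The only non-trivial point is the stability of the $v$-reduced form under composition with $v$-invertible M\"obius transformations, which is the key obstacle: without the invertibility of $\det A$ and $\det B$ in $O_v$ the implications above fail, as illustrated by the example preceding the lemma where $\Psi(x)=px$ destroys the $v$-reducedness upon composition. Once $v$-reducedness is secured, the commutativity $(\alpha\circ\Phi\circ\beta)_v=\alpha_v\circ\Phi_v\circ\beta_v$ reduces to the trivial observation that reduction modulo $v$ of polynomials commutes with substitution of $O_v$-polynomial expressions in the variables.
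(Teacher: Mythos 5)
Your proposal is correct and follows essentially the same route as the paper: both reduce the lemma to showing that composition with a $v$-invertible M\"obius transformation (on either side) preserves the $v$-reduced form, and both establish this by observing that the inverse matrix also has entries in $O_v$, so if all coefficients of the composite lay in the maximal ideal the same would follow for $F$ and $G$, contradicting $v$-reducedness. The only cosmetic difference is that you spell out the right-composition case, which the paper dismisses with ``a similar argument works.''
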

\begin{proof}
Let
$$\alpha(x)=\dfrac{ax+b}{cx+d}\ \ ,\ \ \Phi(x)=\frac{f(x)}{g(x)}$$
be represented in $v$-reduced form. Now observe that the function
\begin{equation}\label{reducedf}(\alpha\circ \Phi)(x)=\dfrac{af(x)+bg(x)}{cf(x)+dg(x)}\end{equation}
is represented in $v$-reduced form. This follows by considering a representation of $\alpha^{-1}$ in $v$-reduced form:\\
let
$$\alpha^{-1}(x)=\dfrac{lx+r}{sx+t},$$ where $l,r,s,t\in O_v$ and the following identity holds
$$\begin{pmatrix}l& r\\s&t\end{pmatrix}\begin{pmatrix}a& b\\c&d\end{pmatrix}=\begin{pmatrix}1& 0\\0&1\end{pmatrix},$$
then
$$\begin{pmatrix}l& r\\s&t\end{pmatrix}\begin{pmatrix}af(x)+bg(x)\\cf(x)+dg(x)\end{pmatrix}=\begin{pmatrix}f(x)\\g(x)\end{pmatrix}.$$
If (\ref{reducedf}) were not in the reduced form, then also $\Phi=f/g$ would not too.

Therefore
$$
(\alpha\circ \Phi)_v(x)=\dfrac{a_vf_v(x)+b_vg_v(x)}{c_vf_v(x)+d_vg_v(x)}=(\alpha_v\circ\Phi_v)(x).
$$
A similar argument works to prove that for any rational function $\Psi$ defined over $K$, then $(\Psi\circ\beta)_v=\Psi_v\circ\beta_v$. Now we consider $\Psi =\alpha\circ\Phi$ and we obtain
$$\alpha_v\circ\Phi_v\circ\beta_v=(\alpha\circ\Phi)_v\circ\beta_v=(\alpha\circ\Phi\circ\beta)_v.$$
\end{proof}

\begin{Oss}\label{ext}Given a number field $K$ and a place $v$ of $K$, the notion of critically good reduction
is independent of the extension of $v$ to $\overline{\Q}$. Furthermore it is clear by definition of C.G.R. that,
fixed an arbitrary finite extension $L$ of $K$, a rational map $\Phi$ defined over a number field $K$ has C.G.R. at $v$
if and only if $\Phi$, as a rational map defined over $L$, has C.G.R. at $v$. In this way, without loss of generality,
up to enlarging $K$, we can suppose that all ramification points of $\Phi$ are $K$--rational. The same holds also for S.G.R.
in the sense that it is completely trivial that the notion of simply good reduction is stable by extension to a finite extension $L$ of $K$.
Since the action of ${\rm PGL}(2,O_v)$ is transitive on the pairs of elements of $\SR(K)$ that does not have the same reduction
modulo $v$, we can suppose $K$ enlarged so that if a rational map $\Phi$ has C.G.R. at a place $v$, then we may assume
 that $\{0,\infty\}\subset \mathcal{R}_{\Phi}$ and
$\Phi(0)=0$ and $\Phi(\infty)=\infty$. It is sufficient to take instead of $\Phi$ the composition $\alpha\circ\Phi\circ\beta$ where $\alpha,\beta$ are suitable $v$--invertible rational maps associated to some elements of ${\rm PGL}(2,O_v)$.
Note that by Lemma \ref{commut} it follows immediately that $\Phi_v$ is separable if and only if $(\alpha\circ\Phi\circ\beta)_v$ is separable.

\end{Oss}

Now we state a simple Lemma that contains some characterizations of being S.G.R. at $v$.
\begin{Lemma}\label{lem:SGR}
For a morphism $\Phi:\SR\to\SR$ of degree $\geq 1$ the following are equivalent:
\begin{itemize}
\item[a)] $\Phi$ has S.G.R. at a finite place $v$;
\item[b)] $\Phi_v$ is not constant and for any $x_1,x_2\in \SR$ if  $x_1\equiv x_2\mod v$ then  $\Phi(x_1)\equiv \Phi(x_2)\mod v$;
\item[c)] $\Phi_v$ is not constant and there exist $w,z\in \SR$ with $w\not \equiv z\mod v$ such that for any $x_1,x_2\in \SR$ with $\Phi(x_1)=w$ and $\Phi(x_2)=z$ then $x_1\not\equiv x_2\mod v$.
\end{itemize}
\end{Lemma}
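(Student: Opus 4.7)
The plan is to prove the chain (a) $\Rightarrow$ (b) $\Rightarrow$ (c) $\Rightarrow$ (a). The first two implications are essentially formal. For (a) $\Rightarrow$ (b): $\deg(\Phi_v) = \deg(\Phi) \geq 1$ forces $\Phi_v$ non-constant, and S.G.R.\ means that in $v$-reduced form $\Phi = f/g$ with $\gcd(f_v, g_v) = 1$, so reduction is compatible with applying $\Phi$; hence $x_1 \equiv x_2 \pmod{v}$ gives $\Phi(x_1) \equiv \Phi(x_2) \pmod{v}$. For (b) $\Rightarrow$ (c), any pair $w, z \in \SR$ with $w \not\equiv z \pmod{v}$ (e.g.\ $0$ and $\infty$) serves as witnesses: two colliding preimages would, by (b), force $w \equiv z \pmod{v}$, contradicting the choice of $w$ and $z$.

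The heart of the lemma is (c) $\Rightarrow$ (a). First I would invoke Lemma \ref{GR} together with the transitivity of $\mathrm{PGL}(2, O_v)$ on ordered pairs of $\SR$-points with distinct reductions: explicit M\"obius formulas (as used in Remark \ref{ext}) let us post-compose $\Phi$ with a $v$-invertible automorphism that sends $(w, z)$ to $(0, \infty)$. Writing $\Phi([X:Y]) = [F:G]$ in $v$-reduced form afterwards, the preimages of $w$ and $z$ become precisely the zeros of $F$ and $G$ in $\SR(\overline{\Q})$, and (c) translates to: no zero of $F$ and no zero of $G$ share a reduction modulo $v$.

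Next I would exploit the non-constancy of $\Phi_v$ to conclude that both $F_v$ and $G_v$ are nonzero homogeneous polynomials of degree $d$ (otherwise $\Phi_v$ would collapse to the constant $0$ or the constant $\infty$). Because both $F$ and $G$ are then individually in $v$-reduced form, a standard Gauss-lemma argument -- factoring $F = c\prod_i(\beta_i X - \alpha_i Y)$ with each linear factor $v$-normalized and noting that the content forces $c$ to be a $v$-unit -- identifies the zeros of $F_v$ in $\SR(\overline{k(v)})$ (counted with multiplicity) with the reductions of the zeros of $F$, and similarly for $G$. Combined with (c), $F_v$ and $G_v$ share no common zero, so $\gcd(F_v, G_v) = 1$ and $\deg \Phi_v = d$, giving S.G.R.

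The main technical obstacle is the last step: bridging between zeros of the reduced polynomials and reductions of the zeros of the original polynomials. The only thing that could spoil this identification is $F_v$ or $G_v$ vanishing identically, which is precisely the degenerate case ruled out by the non-constancy hypothesis on $\Phi_v$.
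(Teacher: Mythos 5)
Your proof is correct and follows essentially the same route as the paper: the chain $a)\Rightarrow b)\Rightarrow c)\Rightarrow a)$, with the key step being the normalization of $(w,z)$ to $(0,\infty)$ via a $v$-invertible automorphism and the deduction that $F_v$ and $G_v$ are coprime because colliding preimages of $0$ and $\infty$ are excluded. The only cosmetic difference is that you run the last step homogeneously via Gauss's lemma on the factorization of $F$ and $G$, whereas the paper works affinely, additionally arranges $\Phi(\infty)=\infty$, and argues through the leading coefficient of $f$; both executions are sound.
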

\begin{proof}
$a)\Rightarrow b)$.  If one considers the following commutative diagram
\begin{center}
$\begin{diagram}
\node{\ \mathbb{P}^1_{k(v)}}\arrow{e,t}{\Phi} \arrow{s,l}{}
\node{\ \mathbb{P}^1_{k(v)}}\arrow{s,r}{}\\
\node{\ \mathbb{P}^{1}_{O_v}}\arrow{e,t}{{\Phi}_{O_v}}\node{\ \mathbb{P}^{1}_{O_v}}
\end{diagram}$\end{center}
where the vertical map are the natural closed immersions,
then it is easy to prove the above assertion.

$b)\Rightarrow c)$. This is immediate.

$c)\Rightarrow a)$. Let $\Phi(x)=f(x)/g(x)$ be a rational function defined over a
number field $K$, with $f,g\in O_v[x]$ coprime, written in $v$-reduced form. By
lemma \ref{GR} and  Remark \ref{ext}, up to enlarging $K$ and taking a suitable element in the equivalence class of $\Phi$,  we can assume that $w=0$, $z=\infty$, and $\Phi(\infty)=\infty$. So $\deg f>\deg g$.
We observe that, by hypothesis, any preimage of $0$  has non negative valuation, which means that modulo $v$ does not coincide with $\infty$. This and the fact that $\phi_v$ is not constant imply that $f(x)$  has $v$-invertible leading coefficient.
In this situation we have that $\Phi$ has S.G.R. at $v$ if and only if
$f$ and $g$ have no common factors modulo $v$.
But this would contradict the statement in c).
\end{proof}
The above characterizations of S.G.R. (especially part c)  will be used just to shorten some of the following proofs.\\
On the contrary the next lemma, which gives another characterization of C.G.R., will play an important role in the proof of Theorem \ref{CPT}.

\begin{Lemma}\label{lcphi1}
A morphism $\Phi:\SR\to\SR$ of degree $\geq 2$ has C.G.R. at a finite place $v$ and $\Phi_v$ is separable if and only if
\begin{itemize}
\item[1)]  $\Phi_v$ is not constant;
\item[2)] if $x_1\in \mathcal{R}_{{\Phi}}$, $x_2\in \Phi^{-1}(\Phi(\mathcal{R}_{\Phi}))$ then $x_1\equiv x_2 \mod v$ if and only if $x_1=x_2$;
\item[3)] the ramification index of any ramification point is not divisible by the characteristic of the residue field $k(v)$.
\end{itemize}
\end{Lemma}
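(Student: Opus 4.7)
The plan is to prove the two implications separately, invoking Lemma~\ref{GR}, Lemma~\ref{commut} and Remark~\ref{ext} throughout to normalize by $v$-invertible M\"obius transformations, which preserve both notions of good reduction, separability of $\Phi_v$, and all three conditions in the statement. We keep $\Phi=f/g$ written in $v$-reduced form, and recall that the polynomial $\Phi^{(1)}=f'g-fg'$ from \eqref{diff} has degree at most $2d-2$ and its affine zero set is $\mathcal{R}_\Phi\setminus\{\infty\}$, with multiplicity $e_i-1$ at each ramification point of index $e_i$, since $\Phi^{(1)}=g^2\Phi'$ and $g$ does not vanish at finite non-pole points.

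For the forward direction, (1) is immediate since a constant rational function over the perfect field $\overline{k(v)}$ is a $p$-th power, hence non-separable. For (3), suppose by contradiction that some ramification point $x_0$ has index $e$ divisible by $p$; after normalization we may assume $x_0=0$ and $\Phi(0)=0$. A local expansion $\Phi(x)=cx^e+\cdots$ together with the identity $\Phi^{(1)}=g^2\Phi'$ shows that $\Phi^{(1)}_v$ vanishes at $\bar 0$ to order strictly greater than $e-1$. Since C.G.R.\ ensures that the finite elements of $\mathcal{R}_\Phi$ reduce to distinct points of $\overline{k(v)}$, the local multiplicity count of $\Phi^{(1)}_v$ then exceeds $\deg\Phi^{(1)}_v\le 2d-2$, forcing $\Phi^{(1)}_v\equiv 0$ and contradicting separability of $\Phi_v$. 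For (2), fix $x_1\in\mathcal{R}_\Phi$ and $x_2\in\Phi^{-1}(\Phi(\mathcal{R}_\Phi))$ with $x_1\equiv x_2\pmod v$ and $x_1\ne x_2$. If $x_2\in\mathcal{R}_\Phi$, the first C.G.R.\ condition gives the contradiction directly; otherwise $\Phi(x_2)=\Phi(x_1')$ for some ramification point $x_1'\ne x_2$, and after normalizing $x_1=\infty$ a local analysis of the pole of $\Phi$ at $x_2$, compared with the injectivity of the reduction of branch points provided by the second C.G.R.\ condition, yields a contradiction.

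For the backward direction, applying (2) with $x_1,x_2\in\mathcal{R}_\Phi$ immediately gives the first C.G.R.\ condition. Separability of $\Phi_v$ follows from (3): the reduction of each ramification point of $\Phi$ is a ramification point of $\Phi_v$ of the same index — this local lifting is controlled by (2), which prevents other preimages from colliding with the ramification point modulo $v$ — and hence at least one ramification index of $\Phi_v$ is coprime to $p$, preventing $\Phi_v$ from being a $p$-th power. For the second C.G.R.\ condition, assume for contradiction that two branch points $y_1\ne y_2$ of $\Phi$ reduce to a common point modulo $v$; normalize $y_1=0$ and examine the fibre equations $f(x)=0$ and $f(x)-y_2 g(x)=0$ in $v$-reduced form. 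Using (2) to separate reductions of ramified preimages of $y_1$ from those of $y_2$, and (3) to keep the ramification tame, one derives that $y_1\equiv y_2\pmod v$ is impossible.

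The main obstacle is the second C.G.R.\ condition in the backward direction. Since S.G.R.\ is not yet assumed, the reductions of preimages of $y_1$ and $y_2$ cannot be read off directly from the factorizations of $f$ and $f-y_2 g$ over $O_v$, and the contradiction must therefore be extracted by careful local analysis at the ramification points lying above $y_1$ and $y_2$, combined with the tameness furnished by (3) and the preimage-separation furnished by (2).
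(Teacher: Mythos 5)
Your forward direction is essentially workable: the multiplicity count on $(\Phi^{(1)})_v=\theta_v\prod_k(x-(\alpha_k)_v)^{e_k-1}$ does force $(\Phi^{(1)})_v\equiv 0$ when some ramification index is divisible by $p$, and, combined with the identity $(\Phi^{(1)})_v=h(x)^2(f_1'g_1-f_1g_1')$ where $f_v=hf_1$, $g_v=hg_1$, this contradicts separability of $\Phi_v=f_1/g_1$. But your treatment of condition 2) in that direction is too vague: after normalizing $x_1=\infty$, $\Phi(\infty)=\infty$, the relevant mechanism is not a local analysis of a pole at $x_2$ (in general $x_2$ is not a pole at all) but the fact that the leading coefficients of $f$ and $g$ are $v$-units. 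The paper extracts this from the identity $\theta={\rm lc}(f)\,{\rm lc}(g)\,(\deg f-\deg g)$ for the leading coefficient of $\Phi^{(1)}$: separability forces $\theta$ to be a unit, hence ${\rm lc}(f)$ and ${\rm lc}(g)$ are units and no element of $\Phi^{-1}(0)\cup\Phi^{-1}(\infty)$ other than $\infty$ reduces to $\infty$. That single formula is the linchpin of the whole lemma and is absent from your plan.

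The backward direction has two genuine gaps. First, your proof of separability rests on the claim that the reduction of a ramification point of $\Phi$ is a ramification point of $\Phi_v$ \emph{of the same index}; a statement of this kind is exactly what one can only control once S.G.R.\ is known (it is used in the proof of Theorem \ref{CPT} \emph{under} the S.G.R.\ hypothesis), and S.G.R.\ is not available here --- Theorem \ref{CPT} deduces it from this very lemma, so the step is either unjustified or circular. The paper instead obtains separability again from the identity for $\theta$: condition 2) forces ${\rm lc}(f)$ and ${\rm lc}(g)$ to be units (a non-unit leading coefficient produces an element of $\Phi^{-1}(\Phi(\mathcal{R}_\Phi))$ reducing to $\infty\in\mathcal{R}_\Phi$), condition 3) gives $p\nmid(\deg f-\deg g)$, hence $\theta$ is a unit and $(\Phi^{(1)})_v\neq 0$. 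Second, you explicitly leave open the non-collision of branch points, which you rightly identify as the main obstacle; the paper closes it by normalizing $y_1=\infty$, $\Phi(\infty)=\infty$, $0\in\mathcal{R}_\Phi$, $\Phi(0)=y_2$, writing $g=b_m\prod_j(x-\rho_j)$, observing that every pole $\rho_j$ lies in $\Phi^{-1}(\Phi(\mathcal{R}_\Phi))$ and hence by 2) is a $v$-unit, then using 1) to see that $b_m$, and therefore $b_0=\pm b_m\prod_j\rho_j$, is a unit, so that $y_2=a_0/b_0$ is a $v$-integer and cannot reduce to $\infty=y_1$. As it stands the proposal does not constitute a proof of either implication in full.
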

\begin{proof}
Let us suppose that $\Phi$ has C.G.R. at $v$. Let $x_1$ and $x_2$ be as in 2).
By Remark \ref{ext}, without loss of generality  we may assume  $x_1=\infty, \Phi(\infty)=\infty$ and $x_2\in \Phi^{-1}(0)\bigcup \Phi^{-1}(\infty)$. In particular $\deg(f)>\deg(g)$ and we are assuming that $0\in\Phi(\mathcal{R}_\Phi)$.
Since C.G.R. is stable under a finite extension of the field of the coefficients of $\Phi$,
we may assume that all the polynomials we are dealing with have linear factors over $K$.
So let us write down the factorization of $\Phi^{(1)}(x)$ (see (\ref{diff}))
\begin{equation}\label{**}
\Phi^{(1)}(x)=\theta\prod_{k}(x-\alpha_k)^{e_k}
\end{equation}
where $\theta\in O_v$ is the leading coefficient of $\Phi^{(1)}(x)$
and $\{\alpha_k\}_k=\mathcal{R}_{\Phi}\setminus\{\infty\}$. Note that, since $\Phi$ has C.G.R. at $v$ then
$\alpha_k$ is in $O_v$ for all $k$. Since $\deg(f)>\deg(g)$ by direct computation  we get that
\begin{equation}\label{theta}
\theta={\rm lc}(f){\rm lc}(g)(\deg f-\deg g)
\end{equation}
Therefore we get that
$$
\Phi^{(1)}\not\equiv0\pmod v\Leftrightarrow \theta\not\equiv0\pmod v
$$
since each $\alpha_k$ is a $v$-integer.

Note that if $f_v$ and $g_v$, the reduction modulo $v$ of polynomials $f$ and $g$, are such that
$$f_v(x)=h(x)f_1(x) \ \ ,\ \ g_v(x)=h(x)g_1(x)$$
with suitable $h,f_1,g_1\in k(v)[x]$ and $f_1,g_1$ coprime, then $h(x)$ is not zero because $\Phi=f/g$ is in $v$-reduced form. Furthermore $(\Phi^{(1)})_v(x)$ the reduction modulo $v$ of the polynomial $\Phi^{(1)}(x)$ is such that
$$(\Phi^{(1)})_v(x)=h(x)^2(f_1^\prime g_1-f_1g_1^\prime).$$
Hence $(\Phi^{(1)})_v$ is zero if and only if $f_1^\prime g_1-f_1g_1^\prime$ is zero, which is equivalent to $\Phi_v=f_1/g_1$ inseparable.

Therefore $\Phi_v$ is separable if and only if the leading coefficients of $f$ and $g$ are $v$-units and $\deg f-\deg  g$ is not divisible by the characteristic of the residue field $k(v)$. But $\deg f- \deg g$ is the ramification index of $\infty$, therefore this means  the ramification index of $\infty$ is not divisible by the characteristic of $k(v)$.  \\
Since the leading coefficients of $f(x)$ and $g(x)$  are
 $v$-units then all the elements in $\Phi^{-1}(0)\cup \Phi^{-1}(\infty)$ different from $\infty$ are not equivalent to $\infty$ modulo $v$. In particular this holds for $x_2$. We have so proved that under the condition that $\Phi$ has C.G.R. at $v$ this implies that $\Phi_v$ separable is equivalent to 1), 2) and 3).

Now we prove that conditions 1), 2) and 3) imply that $\Phi$ has C.G.R. at $v$. By 2) to prove that $\Phi$ is C.G.R. it is sufficient to verify the condition on the branch locus. We have to prove that for any pair of distinct points $y_1,y_2\in \Phi(\mathcal{R}_\Phi)$ then $y_1$ and $y_2$ are also distinct modulo $v$. Again from Remark \ref{ext}, and by condition 2), we can suppose that $y_1=\infty$, $\Phi(\infty)=\infty$, $\Phi(0)=y_2$, and $ 0,\infty\in\mathcal{R}_\Phi$. We represent $\Phi$ in the following $v$-normal form
\begin{equation}\label{normform}
\Phi(x)=\frac{a_dx^d+\dots+a_0}{b_mx^m+\dots+ b_0}=\dfrac{a_d\prod_i(x-\eta_i)}{b_m\prod_j(x-\rho_j)}
\end{equation}
where  $d>m+1$ and $a_i, b_j\in O_v$ for all indexes $i,j$. We suppose $K$ enlarged so that it contains all roots $\eta_i$ and $\rho_j$. Note that $y_2=a_0/b_0$. Since any root $\rho_j$ of the denominator ${b_mx^m+\dots+ b_0}$ is in the fiber of $\infty\in \mathcal{R}_\Phi$ and also $0$ is a ramification point, then by 2) each $\rho_j$ has to be a $v$-unit. Since $\Phi_v$ is not constant, then $b_0=b_m\prod_j\rho_j$ is a $v$-unit, thus $v(y_2)\geq 0$. Therefore the reduction modulo $v$ of $y_2$ is not $\infty$. This proves that $\Phi$ has C.G.R at $v$.\\
\end{proof}

\begin{proof}[Proof of Theorem \ref{CPT}]
We first prove that $a)\Rightarrow b)$.
\\
Let $\Phi(x)=f(x)/g(x)$ be a rational function defined over $K$, with $f,g\in O_v[x]$ coprime, written in $v$-reduced form. By Remark \ref{ext} we can assume that $\{0,\infty\}\subset \mathcal{R}_{\Phi}$ and
$\Phi(0)=0$, $\Phi(\infty)=\infty$. In
particular we also have that $\deg(f)>\deg(g)$. 
Let us use the notation as in the proof of Lemma \ref{lcphi1} in particular the notation in (\ref{**}).
Furthermore we suppose $K$ enlarged so that it contains all roots of the polynomials $f,g$ and $\Phi^{(1)}$.

Let us suppose that $\Phi$ has not S.G.R. at $v$. This means that there exist $\beta_1\in \Phi^{-1}(0)$ and $\beta_2\in \Phi^{-1}(\infty)$ such that $\beta_1\equiv \beta_2\mod v$.
Let us define $\beta_v\doteqdot(\beta_1)_v=(\beta_2)_v$. Note that it is not possible that $\beta_v$ is $\infty$, by part 2) of Lemma \ref{lcphi1}. Let $(\Phi^{(1)})_v$ be the polynomial obtained from $\Phi^{(1)}$ by reduction of its coefficients modulo $v$. Since
$$
(\Phi^{(1)})_v(x)=f'_v(x)g_v(x)-f_v(x)g'_v(x),
$$
we have that  $\beta_v$ is a root of the polynomial  $(\Phi^{(1)})_v$. Since $\Phi_v$ is separable, the polynomial $(\Phi^{(1)})_v$ is not zero. Thus any root of the polynomial $(\Phi^{(1)})_v$ is the reduction modulo $v$ of a ramification point $\alpha_i$ for $\Phi$. Therefore $\beta_v$ is equal to the reduction modulo $v$ for one $\alpha_i$ for a suitable index $i$.
 Clearly $\alpha_i\neq \beta_1$ or $\alpha_i\neq \beta_2$. This contradicts 2) of Lemma \ref{lcphi1}.

We now prove $b)$ implies $a)$.
Since $K$ has characteristic 0, then the Riemann-Hurwitz Formula in our situation becomes:

\begin{equation}\label{RieHur} 2\deg\Phi -2=\sum_{P\in\SR(\overline{\Q})}(e_P(\Phi)-1).\end{equation}

Since the map $\Phi_v$ is separable, the Riemann-Hurwitz Formula holds for $\Phi_v$. But the map is defined over $k(v)$ and the characteristic is positive, hence we could have wild ramification. Let R$_{\Phi_v}$ be the ramification divisor associated to the map $\Phi_v$. By  \cite[Prop. 2.2]{Hartshorne} we have that

$$ \deg {\rm R}_{\Phi_v}\geq \sum_{P\in\SR(\overline{k(v)})}(e_P(\Phi_v)-1).$$
Since $\Phi$ has S.G.R. at $v$, by Riemann-Hurwitz Formula we have

\begin{equation}\label{RamDeg}2\deg\Phi-2=2\deg\Phi_v-2=\deg {\rm R}_{\Phi_v}.
\end{equation}
Moreover for any ramification point $P$ of $\Phi$, the point $P_v\in \SR(\overline{k(v)})$ (i.e. the reduction mod $v$ of the point $P$) is a ramification point for $\Phi_v$ and the ramification index $e_{P_v}(\Phi_v)$ is equal or grater than the ramification index $e_{P}(\Phi)$. Furthermore, by the condition of non singularity of the branch locus $\Phi(\mathcal{R}_\Phi)$ of $\Phi$, if $Q_1,Q_2\in \Phi(\mathcal{R}_\Phi)$ are distinct points, then by Lemma \ref{lem:SGR} the sets $(\Phi^{-1}(Q_1))_v$ and $(\Phi^{-1}(Q_2))_v$ are disjoint. Thus the following inequalities hold
$$\deg {\rm R}_{\Phi_v}\geq \sum_{P\in\SR(\overline{k(v)})}(e_P(\Phi_v)-1)\geq \sum_{P\in\SR(\overline{\Q})}(e_P(\Phi)-1).$$
Now suppose that there exist two distinct ramification points $P_1,P_2\in \mathcal{R}_\Phi$ such that $(P_1)_v=(P_2)_v$, then by the S.G.R. condition we have $(\Phi(P_1))_v=(\Phi(P_2))_v$ and by the condition on the branch locus the identity $\Phi(P_1)=\Phi(P_2)$ holds. In this situation the second inequality becomes strict:
$$\deg {\rm R}_{\Phi_v}\geq \sum_{P\in\SR(\overline{k(v)})}(e_P(\Phi_v)-1)> \sum_{P\in\SR(\overline{\Q})}(e_P(\Phi)-1)$$
which gives a contradiction by identities (\ref{RieHur}) and (\ref{RamDeg}).
\end{proof}
\bigskip


\section{Some examples}

In this section we will also consider some cases in which the residue map is not separable, so that one can not apply directly the previous theorem.
The following example gives an example
in which the implication $b)\Rightarrow a)$ in  Theorem $\ref{CPT}$ does not hold without the condition about separability. It also shows that the condition C.G.R. is not stable under composition of maps.
\begin{Esempio}\label{E1}
The set of ramification points of the rational map $\Phi(x)=(x-1)^2$ is $\mathcal{R}_\Phi=\{\infty, 1\}$ and the branch locus is $\Phi(\mathcal{R}_\Phi)=\{\infty, 0\}$.  The set of ramification points of $\Phi^2=\Phi\circ\Phi$ is $\mathcal{R}_{\Phi^2}=\{\infty, 0,1,2\}$ and the branch locus is $\Phi^2(\mathcal{R}_{\Phi^2})=\{\infty, 0, 1\}$. Therefore $\Phi$ has C.G.R. at all finite places $v$ and $\Phi^2$ does not have C.G.R. at $2$. Furthermore, note that $\Phi^2$ has S.G.R. at any finite places $v$, the branch locus of $\Phi^2$ is not singular modulo any   finite places $v$. Therefore Theorem \ref{CPT} does not apply to $\Phi^2$ because it is inseparable modulo 2.
\end{Esempio}
\begin{Prop}\label{oss:caso particolare}
Let $\Phi:\SR\rightarrow \SR$ be a morphism defined over $K$ of degree $\geq 2$ such that a point $x$ belongs to the ramification locus if and only if this holds for any point of the fiber $\Phi^{-1}(\Phi(x))$ (e.g. a Galois cover). Then the following are equivalent:
\begin{itemize}
\item[a)]$\Phi$ is S.G.R and C.G.R. at $v$;
\item[b)]$\Phi_v$ not constant and  $\#\mathcal{R}_{\Phi}=\#(\mathcal{R}_{\Phi})_v$.
\end{itemize}
\end{Prop}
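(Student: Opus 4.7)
The direction $a)\Rightarrow b)$ is immediate from the definitions, so my plan focuses on $b)\Rightarrow a)$. The hypothesis amounts to $\mathcal{R}_\Phi=\Phi^{-1}(\Phi(\mathcal{R}_\Phi))$, so fibres over branch points consist entirely of ramification points. Adding the identities $\sum_{P\in\Phi^{-1}(y)}e_P(\Phi)=d$ for each branch point $y$ and invoking Riemann--Hurwitz yields the useful count $\#\mathcal{R}_\Phi=(n-2)d+2$, where $d=\deg\Phi$ and $n=\#\Phi(\mathcal{R}_\Phi)$. I intend to establish S.G.R.\ first via Lemma \ref{lem:SGR}, and then C.G.R.\ by a fibre-degree argument analogous to the one in the proof of Theorem \ref{CPT}; both rest on a preliminary subclaim asserting that at least two branch points reduce to distinct values modulo $v$.

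For the subclaim I would argue by contradiction. If every branch point has the same reduction, composing with an element of ${\rm PGL}(2,O_v)$ (Remark \ref{ext}) lets me assume that common value is $\infty$. Writing $\Phi=f/g$ in $v$-reduced form, a valuation check shows that every $P\in\mathcal{R}_\Phi$ reduces either to $\infty$ or to a root of $g_v$, giving $\#(\mathcal{R}_\Phi)_v\leq\deg g_v+1\leq d+1$. Combined with $\#\mathcal{R}_\Phi=(n-2)d+2=\#(\mathcal{R}_\Phi)_v$ this forces $n\leq 2$, hence $n=2$. A further ${\rm PGL}(2,O_v)$-change of coordinates on the source (possible since the two ramification points have distinct reductions) then identifies $\Phi$ with a map of the form $(Ax^d+B)/(Cx^d+D)$ where $B/D$ and $A/C$ are the two branch points, both reducing to $\infty$. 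Passing explicitly to the $v$-reduced form of this rational function shows that the denominator vanishes identically modulo $v$ while the numerator does not, so $\Phi_v$ is the constant map $\infty$, contradicting the non-constancy assumption.

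With the subclaim in hand, S.G.R.\ follows quickly: if $\Phi$ lacked S.G.R., the contrapositive of part~c) of Lemma \ref{lem:SGR} (using that $\Phi_v$ is non-constant) would produce, for two branch points $y_1,y_2$ with $(y_1)_v\neq(y_2)_v$ supplied by the subclaim, preimages $x_1\in\Phi^{-1}(y_1)$ and $x_2\in\Phi^{-1}(y_2)$ with $x_1\equiv x_2\pmod v$; the hypothesis forces $x_1,x_2\in\mathcal{R}_\Phi$, and they are distinct since $\Phi(x_1)\neq\Phi(x_2)$, contradicting $\#\mathcal{R}_\Phi=\#(\mathcal{R}_\Phi)_v$. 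For C.G.R.\ it only remains to exclude that two distinct branch points $y_1\neq y_2$ satisfy $(y_1)_v=(y_2)_v$: by the hypothesis the set $\Phi^{-1}(y_1)\cup\Phi^{-1}(y_2)$ lies in $\mathcal{R}_\Phi$ and so has pairwise distinct reductions, and by S.G.R.\ these all land in the single fibre $\Phi_v^{-1}((y_1)_v)$. The degree identities $\sum_{P\in\Phi^{-1}(y_i)}e_P(\Phi)=d$ together with the semicontinuity $e_P(\Phi)\leq e_{P_v}(\Phi_v)$ and $\sum_{Q\in\Phi_v^{-1}((y_1)_v)}e_Q(\Phi_v)=\deg\Phi_v=d$ would then give $2d\leq d$, absurd. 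The main obstacle is the subclaim, specifically the explicit matrix manipulation required in its $n=2$ case.
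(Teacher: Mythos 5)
Your proof is correct, but it reaches the conclusion by a genuinely different route from the paper's. The paper establishes the non-singularity of the branch locus \emph{first} and directly: given $x_1,x_2\in\mathcal{R}_\Phi$ with distinct images, it normalises to $x_1=0$, $x_2=\infty$, $\Phi(\infty)=\infty$, and shows by a local valuation computation --- using only the fibre hypothesis, the non-singularity of $\mathcal{R}_\Phi$ and the non-constancy of $\Phi_v$ --- that the constant term $b_0$ of the denominator is a $v$-unit, whence $\Phi(0)\not\equiv\infty \pmod v$; S.G.R.\ then follows from Lemma \ref{lem:SGR}(c) exactly as in your second step, with the two branch points of distinct reduction supplied by the full branch-locus statement rather than by a separate subclaim. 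You invert this order: you first prove only the weaker subclaim (some two branch points reduce to distinct values) via the global count $\#\mathcal{R}_\Phi=(n-2)d+2$ together with the explicit $n=2$ analysis of $(Ax^d+B)/(Cx^d+D)$, then deduce S.G.R., and only then recover the branch-locus condition by a fibre-degree count using the semicontinuity $e_P(\Phi)\leq e_{P_v}(\Phi_v)$. Both arguments are sound. The paper's local argument is shorter and avoids both the $n=2$ case analysis and any appeal to semicontinuity of ramification indices; your approach buys the structural identity $\#\mathcal{R}_\Phi=(n-2)d+2$, which makes the role of Riemann--Hurwitz under the Galois-type hypothesis explicit, and your closing fibre-degree argument runs parallel to the paper's proof of $b)\Rightarrow a)$ in Theorem \ref{CPT}, so it unifies the two proofs conceptually at the cost of extra length.
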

\begin{proof}
Clearly one has only to prove that $b)$ implies $a)$.
We first prove that the branch locus is not singular.
We have just to prove that if $x_1,x_2\in \mathcal{R}_{\Phi}$ and $\Phi(x_1)\neq \Phi(x_2)$ then $\Phi(x_1)\not\equiv\Phi(x_2)\mod v$. Since the ramification locus is not singular then $x_1\not \equiv x_2\mod v$, so we can suppose that $x_1=0$, $x_2=\infty$ and $\Phi(\infty)=\infty$. Therefore we can represent the map $\Phi$ in the following $v$-normal form
$$
\Phi(x)=\frac{f(x)}{g(x)}=\frac{a_dx^d+\dots+a_0}{b_mx^m+\dots+ b_0}
$$
where $a_i, b_i\in O_v$ and $d>m+1$. We have just to prove that $\Phi(0)\not \equiv \infty \mod v$, i.e. $v(a_0)\geq v(b_0)$. But in fact  $b_0$ is a $v$-unit. Indeed, since $\Phi_v$ is not constant then $g\not\equiv 0 \mod v$. So if $b_0\equiv 0\mod v$, since $\Phi(0)\neq\infty$ by our assumption, there would be a non zero point $z\in \Phi^{-1}(\infty)$ such that $z\equiv 0\mod v$. But this would contradict the hypothesis since any point of $\Phi^{-1}(\infty)$ is ramified and the ramification locus is not singular.

Now it is clear that $\Phi$ has S.G.R. at $v$ using the hypothesis and Lemma  \ref{lem:SGR}.
\end{proof}
%


In the case the degree of the map is equal to 2, we have the following simple situation:

\begin{Prop} Let $K$ be a number field, $v$ a finite places of $K$ and $\Phi$ a rational map of degree 2. Then:
\begin{enumerate}
\item If $v$ does not lie above $2$, then
$$\Phi\ \text{S.G.R. at $v$}\ \Leftrightarrow \Phi \ \text{C.G.R. at $v$ and $\Phi_v$ is not constant}.$$
\item If $v$ lies above $2$, then the following are equivalent.
\begin{itemize}
\item[i)]$\Phi$ is S.G.R. at $v$ and $\Phi_2$  factors through the relative Frobenius of $\mathbb{P}^1_{O_v/2 O_v}$; 
    \item[ii)] $\Phi$ is C.G.R. at $v$ and $\Phi_v$ is not constant.
    \end{itemize}
\end{enumerate}
\end{Prop}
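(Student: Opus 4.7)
The strategy is to handle parts (1) and (2) separately. When $v \nmid 2$ (part (1)), the residue characteristic is at least $3$, and any non-constant rational map of degree at most $2$ over $k(v)$ is automatically separable (otherwise it would be a $p$-th power, of degree $\geq p > 2$). Thus $\Phi_v$ is separable in both cases: S.G.R.\ gives $\deg \Phi_v = 2$, and C.G.R.\ with $\Phi_v$ non-constant gives $\deg \Phi_v \in \{1,2\}$. Theorem \ref{CPT} then yields the equivalence, once we verify the extra branch-locus condition in its hypothesis b); for this I would invoke the Riemann--Hurwitz argument from the proof of Theorem \ref{CPT} to see that in degree $2$ with tame $\Phi_v$ the two ramification points of $\Phi$ reduce bijectively onto the two ramification points of $\Phi_v$, whence the two branch points also reduce to distinct points.

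For part (2), assume $v \mid 2$. The key point is that Lemma \ref{lcphi1}(3) forces $\Phi_v$ to be \emph{inseparable} whenever $\Phi$ is C.G.R., because every ramification index of the degree-$2$ map equals $2 = p$. This immediately gives $\deg \Phi_v = 2$ in direction ii)$\Rightarrow$i), hence S.G.R. Combining C.G.R.\ with Remark \ref{ext}, I would place $\mathcal{R}_\Phi = \{0, \infty\}$ with $\Phi(0) = 0$ and $\Phi(\infty) = \infty$; total ramification at $\infty$ forces $\deg f - \deg g = 2$, and ramification at $0$ forces $f(x) = a x^2$, yielding $\Phi(x) = a x^2/g_0$ with $a, g_0 \in O_v^\times$ by S.G.R. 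Both $f_2$ and $g_2$ are trivially polynomials in $x^2$, so $\Phi_2$ factors through the relative Frobenius of $\mathbb{P}^1_{O_v/2O_v}$.

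For direction i)$\Rightarrow$ii), S.G.R.\ immediately gives $\Phi_v$ non-constant, and the work lies in deriving C.G.R.\ without yet being allowed to use Remark \ref{ext} for both ramification points. I would enlarge $K$, fix one $P_1 \in \mathcal{R}_\Phi$, and apply the $\mathrm{PGL}(2, O_v)$-transitivity on pairs with distinct $v$-reductions to find a $v$-equivalence moving $P_1 \to 0$ and $\Phi(P_1) \to 0$. Lemma \ref{GR} preserves S.G.R., and the Frobenius-twist identity $F \circ \sigma_2 = \sigma_2^{(2)} \circ F$ shows that the factorization through the relative Frobenius is also preserved. In the resulting normal form $\tilde\Phi(x) = a x^2/(g_2 x^2 + g_1 x + g_0)$ with $a, g_0 \in O_v^\times$, the Frobenius factorization translates to the concrete condition $g_1 \in 2 O_v$. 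A direct valuation estimate using $v(g_1) \geq v(2)$ then shows that the second ramification point $-2g_0/g_1$ and the second branch point $4ag_0/(4g_2 g_0 - g_1^2)$ both have $v$-valuation $\leq 0$, so neither reduces to $0$; this gives C.G.R.

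The main obstacle is the correct interpretation of ``$\Phi_2$ factors through the relative Frobenius of $\mathbb{P}^1_{O_v/2O_v}$'' when $v$ is ramified above $2$, since the base ring $O_v/2O_v$ may contain nilpotents. Unwinding the definition, this condition is equivalent to saying that the numerator and denominator of $\Phi$ modulo $2$ are polynomials in $x^2$; verifying its invariance under $v$-equivalence via the Frobenius-twist identity, and then combining it with the valuation estimate $v(g_1) \geq v(2)$ in the normal form, is the technical heart of part (2).
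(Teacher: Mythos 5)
Your part (2) is essentially correct and in places more careful than the paper's own treatment: the paper simply normalizes a C.G.R.\ map to $ax^2$ and a S.G.R.\ map to $ax^2+bx+c$ (reading the Frobenius condition as $2\mid b$, so that the finite ramification point $-b/(2a)$ is a $v$-integer), whereas you track the distinction between $\Phi_v$ over $k(v)$ and $\Phi_2$ over the possibly non-reduced ring $O_v/2O_v$, and you check invariance of the Frobenius factorization under $v$-equivalence, which the paper leaves implicit. Your derivation of S.G.R.\ from C.G.R.\ via Lemma \ref{lcphi1}(3) (forced inseparability, hence $\deg\Phi_v\geq p=2$) is a nice alternative to the paper's route through Proposition \ref{oss:caso particolare}, and your valuation estimates in the normal form $ax^2/(g_2x^2+g_1x+g_0)$ with $v(g_1)\geq v(2)$ check out.

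The genuine gap is in part (1), direction S.G.R.\ $\Rightarrow$ C.G.R. To apply Theorem \ref{CPT} b)$\Rightarrow$a) you must first supply the hypothesis $\#\Phi(\mathcal{R}_\Phi)=\#(\Phi(\mathcal{R}_\Phi))_v$, and you propose to get it from ``the Riemann--Hurwitz argument from the proof of Theorem \ref{CPT}.'' But that argument \emph{assumes} the branch locus is nonsingular modulo $v$: it is exactly this hypothesis, via Lemma \ref{lem:SGR}, that makes the reduced fibers over distinct branch points disjoint and yields $\sum_{\bar P}(e_{\bar P}(\Phi_v)-1)\geq\sum_P(e_P(\Phi)-1)$. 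Using it to derive the branch-locus condition is circular. Nor does pure counting suffice: with $\deg{\rm R}_{\Phi_v}=2$ distributed as two tame ramification points $Q_1,Q_2$ of $\Phi_v$, nothing in the count forbids both ramification points of $\Phi$ from reducing to $Q_1$ while $Q_2$ is a ``new'' ramification point not lying under any ramification of $\Phi$. The claim is true, but it needs a separate input: either the explicit normalization the paper uses ($\Phi\sim ax^2+bx+c$ with $a$ a $v$-unit and $b,c\in O_v$, so the second ramification point $-b/(2a)$ and branch point $c-b^2/(4a)$ are $v$-integers when $v\nmid 2$), or the observation that the zero scheme of the Wronskian $f'g-fg'$ is finite flat of degree $2$ over $O_v$ once $\Phi_v$ is separable, so that the divisor $P_1+P_2$ specializes to the reduced divisor $Q_1+Q_2$. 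You should replace the Riemann--Hurwitz appeal by one of these; with that repair, the rest of your argument goes through.
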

\begin{Oss}
In the above statement, with $\Phi_2$ we mean the restriction of $\Phi$ to the scheme ${\SR}_{O_v/2O_v}$. For the notion of the relative Frobenius we refer to \cite[sec. 3.2.4]{Liu}.
\end{Oss}
\begin{proof} In both cases the \textit{if} part, except the sentence on the inseparability of $\Phi_2$, follows from  Proposition \ref{oss:caso particolare}. Now if $\Phi$ is C.G.R by the argument in Remark \ref{ext}, we can assume that $\Phi$ is of the form $ax^2$. Then, if $v$ is above $2$, $\Phi_2$ is purely inseparable.

Now let us suppose that $\Phi$ is S.G.R. at $v$. By the argument in Remark \ref{ext} we can suppose that $\infty\in\mathcal{R}_{\Phi}$ and $\Phi(\infty)=\infty$. Therefore $\Phi$ has the form $ax^2+bx+c$ with $a,b,c\in K$. By the above Proposition   \ref{oss:caso particolare} we have only to check that the ramification locus is not singular at $v$.
Since $\Phi$ has S.G.R. at $v$, then $a$ is a $v$--unity and $b,c$ are $v$--integers. The ramification points  of $\Phi$ are:
$$ \mathcal{R}_\Phi=\left\{\infty, -\frac{b}{2a}\right\}.$$
Now, if $v$ is not above $2$, then $2a$ is a  $v$-unity so that $\Phi$ is C.G.R. at $v$. If $v$ is above 2 and $\Phi_2$ is purely inseparable, then $2\mid b$  are in $O_v$. Hence, also in this case $\Phi$ has C.G.R. at $v$.
\end{proof}

\section{An application to  arithmetical dynamics}
  As already remarked in the introduction,
the notion of C.G.R. does not have a good behavior with dynamical problems
associated to a rational map.

The next example shows that the behavior of the critical good reduction under iteration of a rational map could be truly bad. Indeed, we give an example of rational map $\Phi$ defined over $\Q$, where with simple calculations, we see that it does not exist a finite set $S$ of valuations of $\overline{\Q}$  such that all iterated of $\Phi$ has C.G.R. at all finite valuations outside $S$.

\begin{Esempio} Consider the rational function $\Phi(x)=x(x-1)$. Its set of ramification points is $\mathcal{R}_\Phi=\{\infty,1/2\}$ and its branch locus is  $\Phi(\mathcal{R}_\Phi)=\{\infty,-1/4\}$. Hence the map $\Phi$ does not have C.G.R. only at $2$.  Let $\Phi^n$ be the $n$--th iterated map of $\Phi$. Denote by $\mathcal{B}_n$ the branch locus of $\Phi^n$ that is
$$\mathcal{B}_n=\Phi^n(\mathcal{R}_{\Phi^n})=\bigcup_{i=1}^n\Phi^i(\mathcal{R}_\Phi)=\{\infty\}\cup \{\Phi^i(1/2)\mid 1\leq i\leq n\}$$
Note that the element $1/2$ is not a preperiodic point for $\Phi$. Indeed we have
$$\Phi^i(1/2)=\frac{a_i}{2^{i+1}}\quad \text{for any index $i\geq 1$}$$
where the $a_i's$ are suitable odd integers. Therefore the sequence $\{\mathcal{B}_n\}$ of sets of elements in $\{\infty\}\cup\Q$ is strictly increasing. Let $S$ be a finite fixed set of finite places of $\overline{\Q}$. Let $p$ be the minimum of the prime integers that are below a valuation not in $S$. Thus any set of elements in $\{\infty\}\cup\Q$ of cardinality bigger than $p+1$ is singular modulo at least at one valuation outside $S$. Hence it does not exist a finite set $S$ of valuation of $\overline{\Q}$ such that all iterated of $\Phi$ has C.G.R. at all finite valuations outside $S$.
\end{Esempio}

By Theorem \ref{CPT} if $\Phi$ is a rational map with C.G.R at a valuation $v$ and $\Phi_v$ is separable modulo $v$, then $\Phi$ has S.G.R. at $v$. Therefore it has good behavior in a dynamical sense. The proof of Corollary \ref{UBCforCGR} is a simple application of our Theorem \ref{CPT},  \cite[Corollary B]{MoS} and \cite[Theorem 1]{Canci}.

In \cite[Corollary B]{MoS} Morton and Silverman proved that if $\Phi$ is a rational map of degree $\geq 2$ which has good reduction outside $S$ (a finite fixed set of valuations of $K$ containing all the archimdean ones with $|S|=t$)  and $P\in\SR(K)$ is a periodic point with minimal period $n$, then the inequality
\begin{equation*} n\leq \left[12(t+1)\log(5(t+1))\right]^{4\left[K:\mathbb{Q}\right]}\end{equation*} holds.

In \cite[Theorem 1]{Canci}, Canci extended the Morton and Silverman's result to any finite orbit (so he considered also pre-periodic points). With the same above hypothesis as in \cite[Corollary B]{MoS}, the Canci's result says that there exists a number $c(t)$, depending only on $t$, such that the length of every finite orbit in $\SR(K)$, for rational maps with good reduction outside $S$, is bounded by $c(t)$.
The number $c(t)$ can be chosen equal to
\begin{align*}\left[e^{10^{12}}(t+1)^8(\log(5(t+1)))^8\right]^{t}.\end{align*}
%
%
\begin{proof}[Proof of Corollary \ref{UBCforCGR}]
Let $\Phi$ be an endomorphism of $\SR$ as in the hypothesis of the corollary. For any prime integer $p\leq \deg\Phi$ we consider all valuations $v_p$ over $K$ which extend the valuation associate to $p$. We enlarge $S$ adding all these valuations $v_p$ for all $p\leq \deg \Phi$. The cardinality of the new set $S$ depends only on $t$, on $d$ the degree of the map and $D$ the degree of $K$ over $\Q$. With this enlarged set $S$, for any $v\notin S$, the reduced map $\Phi_v$ is separable if and only if it is not constant. Therefore the map $\Phi$ has S.G.R. at any valuation outside $S$. We denote by $b(t,d,D)$ the lowest integer bigger than the Morton and Silverman's bound, which depends on the cardinality of the enlarged set $S$. There exists a bound $B(t,D,d)$ which bounds the cardinality of the set of $K$--rational periodic points of $\Phi$. Indeed, any $K$--rational point is a fixed point for the map $\Phi^{b(t,d,D)!}$. Hence we could take $B(t,D,d)= b(t,d,D)!+1$. By the Canci's bound $c(t,d,D)$, which depends on the cardinality of the enlarged set $S$, any $K$--rational periodic point of $\Phi$ is contained in at most $d^{c(t,d,D)}$ finite orbits. Thus we could take $C(t,d,D)=B(t,D,d)d^{c(t,d,D)}c(t,d,D)$.
\end{proof}
Any number depending on $d,D,t$ in our proof could be not optimal. Our aim was to show the existence of a bound $C(d,D,t)$ as in the statement of Corollary \ref{UBCforCGR} and not to find an optimal limit.

\addcontentsline{toc}{section}{Bibliography}

\end{document}